\newtheorem{thm}{Theorem}
\begin{document}

\title{Complexity reduction of C-Algorithm}
\author[mb]{Magali Bardet}
\ead{Magali.Bardet@univ-rouen.fr}
 \address[mb]{Laboratoire d'Informatique, de Traitement de l'Information et des Syst\`emes,\\
  Universit\'e de Rouen \\
  Avenue de l'Université BP 12\\
  76801 Saint \'Etienne du Rouvray \;FRANCE}

\author[ib]{Islam Boussaada\corref{cor2}}
\ead{islam.boussaada@gmail.com}
\address[ib]{Laboratoire de Mathématiques Rapha\"el Salem,\\
  CNRS, Universit\'e de Rouen \\
  Avenue de l'Université BP 12\\
  76801 Saint \'Etienne du Rouvray \;FRANCE\\
 and\\
  Laboratoire des signaux et syst\`emes (L2S)\\
 Sup\'elec - 3 rue Joliot-Curie\\
 91192 Gif-sur-Yvette cedex \;FRANCE}

\cortext[cor2]{Corresponding author}

\begin{keyword}
  Qualitative Theory \sep Ordinary differential equations \sep planar
  systems \sep isochronous centers \sep Urabe function \sep polynomial
  Li\'enard type systems \sep computer algebra
\MSC{34C15, 34C25, 34C37}
\end{keyword}

\begin{abstract}
  The C-Algorithm introduced in~\cite{Chouikha2007} is designed to
  determine isochronous centers for Lienard-type differential systems,
  in the general real analytic case. However, it has a large
  complexity that prevents computations, even in the quartic
  polynomial case.

  The main result of this paper is an efficient algorithmic
  implementation of C-Algorithm, called ReCA (Reduced
  C-Algorithm). Moreover, an adapted version of it is proposed in the
  rational case. It is called RCA (Rational C-Algorithm) and is widely
  used in~\cite{BaBoChSt2010} and~\cite{BoussaadaChouikhaStrelcyn2010}
  to find many new examples of isochronous centers for the Li\'enard
  type equation.
\end{abstract}
\maketitle
\section{Introduction}
The use of symbolic computations and Computer Algebra Systems in the
Qualitative investigations of ordinary differential equations becomes
a standard way, see for instance \cite{DumortierLlibreArtes2006}. One
of the important problems in this field is the characterization and
the explicit description of isochronous centers for planar polynomials
vector fields. See for example~\cite{RomanovskiShafer2009}.

This is a companion paper
of~\cite{BaBoChSt2010,BoussaadaChouikhaStrelcyn2010} which are devoted
to the seeking out of isochronous centers for real planar polynomial
Li\'enard type equation. These investigations are based on the
C-Algorithm introduced in~\cite{Chouikha2007} and used
in~\cite{ChouikhaRomanovskiChen2007} for the cubic case.

The C-Algorithm in its original form has a large computational
cost. In the real analytic case as well as in the particular rational
case the careful inspection of the formulas used leads to more
efficient algorithms, called ReCA (Reduced C-Algorithm) and RCA
(Rational C-Algorithm) respectively. The aim of this note is to give a
detailed description of them.

Consider the Li\'enard type differential equation
\begin{equation}\label{L2} 
\ddot x+f(x){\dot x}^2+g(x)=0
\end{equation} 
where $f$ and $g$ are defined in a neighborhood of $0\in \mathbb{R}$, or equivalently its associated two dimensional (planar)
system
\begin{equation}\label{L2P}
  \left.\begin{aligned} \dot x &= y \\ \dot y &= -g(x) - f(x) y^2
    \end{aligned}\right\}. 
\end{equation}
In this paper, we assume that $g(0)=0$, which insures that $O$ is a
critical point, and $xg(x)>0$ in a punctured neighborhood of $0\in \mathbb{R}$,
which insure that the origin is a center. Moreover, we suppose that
$g'(0)=1$, so that system~\eqref{L2P} is a perturbation of the linear
isochronous center $\dot x = y, \dot y = -x$.

In our knowledge equation~\eqref{L2} appears for the first time in M.~Sabatini's paper~\cite{Sabatini2004}, where sufficient conditions for the isochronicity
of the origin $O$ are given for  $C^1$ functions $f$ and $g$ defined in some
neighborhood of~$0$.

In the real analytic case, necessary and sufficient conditions for
isochronicity are given by A.~R.~Chouikha in~\cite{Chouikha2007},
where Theorem~\ref{thmC} is proved. We use the same notations as in~\cite{Chouikha2007}:
\begin{equation}\label{eq:F}
F(x):= \int_0^x f(s) ds, \quad \phi(x):= \int_0^x e^{F(s)} ds.
\end{equation}

As $xg(x)>0$ for $x\neq0$, the function $\xi$ is well defined by the
relation
\begin{equation}\label{eq:xi}
\frac {1}{2} \xi(x)^2 = \int_0^x g(s) e^{2F(s)} ds
\end{equation}
and the condition $x\xi(x)>0,\;\forall x\neq 0$. Such function $\xi$ is real analytic in some neighborhood of $0$ and $\xi'(0)=1$.
\begin{thm}[Chouikha,\cite{Chouikha2007}]
\label{thmC}
Let $f$ and $g$ be real analytic functions in a neighborhood of $0$, such that
$g(0)=0$ and $x g(x) > 0$ for $x \neq 0$.  Then system~\eqref{L2} has
an isochronous center at $O$ if and only if there exists an odd real 
analytic function $h$, called the Urabe function, satisfying the
following conditions:
 \begin{equation}
 \label{CRI}
 \frac {\xi(x)}{1+h(\xi(x))} = g(x) e^{F(x)}, 
 \end{equation}
 and
 \begin{equation}\label{bb}
  \phi(x) = \xi(x) + \int_0^{\xi(x)} h(t) dt,
 \end{equation}
where $F(x), \phi(x)$ and $\xi(x)$ are defined by~\eqref{eq:F}--\eqref{eq:xi}.
\end{thm}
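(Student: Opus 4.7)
My plan is to transform~\eqref{L2} into a Newton-type equation and then apply an Urabe-style period argument. The first step is the change of dependent variable $u=\phi(x)$, which by $\phi'(0)=1$ is an analytic local diffeomorphism at $0$. A direct computation gives $\dot u=e^{F(x)}\dot x$ and
\[
\ddot u=e^{F(x)}\bigl(\ddot x+f(x)\dot x^2\bigr)=-e^{F(x)}g(x)=:-G(u),
\]
so that~\eqref{L2} is conjugate to the conservative Newton equation $\ddot u+G(u)=0$, whose origin has the same period function.

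Next I would exploit the energy integral $E=\tfrac12\dot u^2+V(u)$ with $V(u):=\int_0^u G(s)\,ds$. Substituting $s=\phi(y)$ and using~\eqref{eq:xi} gives $V(u)=\tfrac12\xi(x)^2$, so introducing $v:=\xi(x)$ and inverting produces an analytic local diffeomorphism $v\mapsto u(v)$ with $u'(0)=1$. Setting $h(v):=du/dv-1$, which is analytic with $h(0)=0$, integration yields $u(v)=v+\int_0^v h(s)\,ds$, i.e.\ exactly~\eqref{bb}; and the identity $v=V'(u)\,du/dv=G(u)\bigl(1+h(v)\bigr)$ rearranges to~\eqref{CRI}. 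Hence~\eqref{CRI} and~\eqref{bb} are two repackagings of the single relation $u(v)=v+\int_0^v h(s)\,ds$, and the theorem reduces to showing that isochronicity at $O$ is equivalent to this uniquely defined $h$ being odd.

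To close the reduction, I would write the period on the energy level $E$ as
\[
T(E)=2\int_{-\sqrt{2E}}^{\sqrt{2E}}\frac{du/dv}{\sqrt{2E-v^2}}\,dv=2\int_{-\sqrt{2E}}^{\sqrt{2E}}\frac{1+h(v)}{\sqrt{2E-v^2}}\,dv,
\]
and use $\int_{-a}^{a}dv/\sqrt{a^2-v^2}=\pi$. Therefore $T(E)\equiv 2\pi$ iff $\int_{-\sqrt{2E}}^{\sqrt{2E}} h(v)/\sqrt{2E-v^2}\,dv=0$ for all small $E>0$. The odd part of $h$ contributes zero by antisymmetry, so this reduces to an Abel-type integral equation for the even part; expanding $h_{\mathrm{even}}(v)=\sum a_{2k}v^{2k}$ and using the positivity of the Wallis integrals $\int_0^{\pi/2}\sin^{2k}\theta\,d\theta$ forces $a_{2k}=0$ for all~$k$. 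This uniqueness step is the main technical obstacle; the reverse implication is the same computation read backwards, which finishes both directions.
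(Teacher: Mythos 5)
The paper does not actually prove Theorem~\ref{thmC}: it is quoted from~\cite{Chouikha2007} and used as a black box, so there is no internal proof to compare against. Your argument is correct and is essentially the classical route by which this result is established in the literature: Sabatini's change of variable $u=\phi(x)$ reduces~\eqref{L2} to the conservative equation $\ddot u+G(u)=0$ with $G(u)=g(x)e^{F(x)}$, the substitution $V(u)=\tfrac12\xi^2$ identifies both~\eqref{CRI} and~\eqref{bb} with the single relation $du/dv=1+h(v)$, and the period integral
\[
T(E)=2\pi+2\int_{-\sqrt{2E}}^{\sqrt{2E}}\frac{h(v)}{\sqrt{2E-v^2}}\,dv
\]
together with the positivity of the Wallis integrals forces the even part of $h$ to vanish (this is precisely Urabe's theorem). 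The only points you should state explicitly rather than gloss over are (i) that $v=\xi(x)$ and $u=\phi(x)$ are both analytic local diffeomorphisms fixing $0$ with derivative $1$ (the paper's hypotheses $g'(0)=1$, $x g(x)>0$ give $\xi'(0)=1$, so $v\mapsto u(v)$ is an analytic diffeomorphism and $h$ is well defined and analytic), and (ii) the justification for integrating the power series of $h_{\mathrm{even}}$ term by term against the kernel $1/\sqrt{2E-v^2}$, which follows from uniform convergence on $[-\sqrt{2E},\sqrt{2E}]$ for $E$ small. With those two remarks made precise, the proof is complete and matches Chouikha's own derivation in spirit and in substance.
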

Taking into account~\eqref{eq:xi}, it is easy to see that~\eqref{CRI} and~\eqref{bb} are equivalent.

This theorem leads to an algorithmic method, named C-Algorithm, which
gives necessary conditions for isochronicity by computing the first
coefficients of the power series expansion around~$0$ of both sides
of~\eqref{CRI}. Then, the sufficiency is insured by establishing
explicitly the odd Urabe function. More details about the C-Algorithm
can be found
in~\cite{Chouikha2007,ChouikhaRomanovskiChen2007,BoussaadaChouikhaStrelcyn2010,BaBoChSt2010}.

In particular, in~\cite{BoussaadaChouikhaStrelcyn2010,BaBoChSt2010},
the authors investigate the practical applicability of the
C-Algorithm to the following family of planar polynomial systems,
which are perturbations of the linear isochronous center $\dot x = y,
\dot y = -x$:
\begin{equation}\label{CHERKAS}
  \left.\begin{aligned} \dot x &= p_{{0}} \left( x \right) +p_{{1}} \left( x \right) y \\ \dot y &= q_{{0}} \left( x \right) +q_{{1}} \left( x \right) y+q_{{2}} \left( x
      \right) {y}^{2}
    \end{aligned}\right\}, 
\end{equation} 
where $p_0,p_1,q_0,q_1,q_2 \in \mathbb{R}[x]$, $p_0(0)=p_0'(0)=0,\,
p_1(0)=1,\, q_0(0)=0,\, q_0'(0)=-1,\, q_1(0)=0$.  Actually, under some
restrictions, such systems are reducible to Li\'enard type differential
equation~\eqref{L2} with rational  functions $f$ and $g$, and in this
case the RCA described in Section~\ref{sec:RCA} is much more
efficient then the standard C-Algorithm.
\section{The C-Algorithm and the ReCA}
In~\cite{Chouikha2007}, the change of variable $u=\phi(x)=\xi + \int_{0}^{\xi} h(s)\,ds$ is introduced. Let us denote by $\tilde{g}(u)$ the function of $u$ represented by
both sides of~\eqref{CRI}
 \begin{equation}
\tilde g(u)=\frac {\xi(x)}{1+h(\xi(x))} = g(x) e^{F(x)}.
\end{equation} 
  The derivative of $u$ with respect to $x$ (resp. $\xi$) can easily be expressed  in
  terms of $x$ (resp. $\xi$),
\[\frac{du}{dx} = e^{F(x)} \mbox{ and } \frac{du}{d\xi} = 1 + h(\xi).\]

The C-Algorithm is based on the two sides computations of the
derivatives of $\tilde g(u)$ with respect to $u$ in terms of $\xi$ and
$x$, which consists in computing the following quantities:
\begin{eqnarray}\label{eq:Calgo}
\left.
  \begin{aligned}
    \tilde P_0(\xi) & =   \frac{\xi}{1+h(\xi)}\\
    \tilde P_k(\xi) &= \frac{d\tilde P_{k-1} (\xi)}{d\xi}\frac{1}{ (1+h(\xi))}  \\
    \tilde Q_0(x) &= g(x) e^{F(x)}\\
    \tilde Q_k(x) &=\frac{d \tilde Q_{k-1}(x)}{dx} e^{-F(x)}
  \end{aligned}
\right\}
\end{eqnarray}
where $k\geq 1$ and evaluating them at $0$, that is $\tilde P_k(0)=\tilde Q_k(0)$.
As $$\tilde P_k(0) = \frac{d^{k}\left(\frac{\xi}{1+h(\xi)}
  \right)}{du^k}\mid_{u=0}, \quad \mbox{and}\quad \tilde Q_k(0)
=\frac{d^{k}\left(g(x) e^{F(x)} \right)}{du^k}\mid_{u=0},$$ by
analycity,~\eqref{CRI} is equivalent to the equalities $\tilde
Q_k(0)=\tilde P_k(0)$, $k\geq0$.

In the next theorem we describe the Reduced C-Algorithm, which takes
care of significant multiplicative factors appearing in the formulas. The paper~\cite{BoussaadaChouikhaStrelcyn2010} is based on it.
\begin{thm}[Reduced C-Algorithm - ReCA]
\label{thmReCA}
  Let $f$ and $g$ be real analytic functions defined in a neighborhood of  \,$0$, such
  that $g(0)=0$, $g'(0)=1$ and $x g(x) > 0$ for $x \neq 0$. Then
  system~\eqref{L2} has an isochronous center at $O$ if and only if, there exists an odd analytic function $h$ such that
  for all $k\ge 0$,
\[\widehat P_k(0) = \widehat Q_k(0)\]
with the recursive formulas:
\begin{eqnarray}\label{eq:widehatPQ}
\left.
  \begin{aligned}
    \widehat P_0(\xi) & =  \xi\\
    \widehat P_k(\xi) &= \frac{d\widehat P_{k-1} (\xi)}{d\xi} (1+h(\xi)) - (2\,k-1)\,\widehat P_{k-1}(\xi) \frac{dh(\xi)}{d\xi} \\
    \widehat Q_0(x) &= g(x)\\
    \widehat Q_k(x) &=\frac{d \widehat Q_{k-1}(x)}{dx} - (k-2) f(x) \widehat
    Q_{k-1}(x)
  \end{aligned}
\right\}.
\end{eqnarray}
Moreover, for all $M>0$, the $M$ first necessary conditions of
isochronicity, $\widehat P_k(0)=\widehat Q_k(0)$ for $1\le k \le M$
can be obtained by computing the truncated power series expansions
around $0$ of $\widehat P_k(\xi)$ and $\widehat Q_k(x)$ up to the order
$M-k$.
\end{thm}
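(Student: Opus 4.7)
The plan is to deduce Theorem~\ref{thmReCA} from Theorem~\ref{thmC} by exhibiting simple multiplicative factors connecting the reduced sequences to those of~\eqref{eq:Calgo}. Concretely, I would prove by induction on $k\ge 0$ the two algebraic identities
\[
\widehat P_k(\xi) \;=\; (1+h(\xi))^{2k+1}\,\tilde P_k(\xi), \qquad \widehat Q_k(x) \;=\; e^{(k-1)F(x)}\,\tilde Q_k(x),
\]
from which everything follows. The base cases $k=0$ are immediate from $\tilde P_0 = \xi/(1+h)$ and $\tilde Q_0 = g\,e^F$.

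For the inductive step on the $\widehat P$-side, I would substitute the hypothesis into~\eqref{eq:widehatPQ} and apply the Leibniz rule: differentiating $(1+h)^{2k-1}\tilde P_{k-1}$ yields $(2k-1)(1+h)^{2k-2}h'\,\tilde P_{k-1} + (1+h)^{2k-1}\tilde P'_{k-1}$, and after multiplying by $1+h$ the first summand is exactly cancelled by the correction $-(2k-1)\widehat P_{k-1}\,h'$, while the second combines with the defining relation $\tilde P_k = \tilde P'_{k-1}/(1+h)$ to produce $(1+h)^{2k+1}\tilde P_k$. A parallel computation on the $\widehat Q$-side shows that the correction $-(k-2)\,f\,\widehat Q_{k-1}$ absorbs the term obtained by differentiating the exponential factor $e^{(k-2)F}$, yielding the second identity. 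The exponents $2k+1$ and $k-1$ are forced by these cancellations, and this is precisely what makes the reduced recurrences free of denominators and exponentials.

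Once the two identities are established, I would evaluate at the origin. Since $h$ is odd, $h(0)=0$, so $(1+h(0))^{2k+1}=1$; and $F(0)=0$ gives $e^{(k-1)F(0)}=1$. Hence $\widehat P_k(0)=\tilde P_k(0)$ and $\widehat Q_k(0)=\tilde Q_k(0)$ for every $k\ge 0$, so the reduced conditions $\widehat P_k(0)=\widehat Q_k(0)$ coincide with the original C-Algorithm conditions $\tilde P_k(0)=\tilde Q_k(0)$. Combined with the analyticity remark preceding Theorem~\ref{thmReCA} and with Theorem~\ref{thmC} itself, this yields the first (``if and only if'') assertion.

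For the truncation claim I would reason by downward induction on $k$. Determining $\widehat P_k(0)$ requires knowing $\widehat P_k(\xi)$ only modulo $\xi$, so it suffices to carry each $\widehat P_k$ up to some order $N_k$ with $N_M\ge 0$. Since the $\widehat P$-recurrence consists of one differentiation (which lowers the truncation order by exactly one) followed by multiplications by $1+h(\xi)$ and $h'(\xi)$ (which preserve it), knowing $\widehat P_{k-1}$ modulo $\xi^{N_k+2}$ is enough to extract $\widehat P_k$ modulo $\xi^{N_k+1}$; setting $N_k = M-k$ propagates the required precision consistently from $k=M$ down to $k=0$. The analogous reasoning with $f(x)$ in place of $h'(\xi)$ handles $\widehat Q_k$. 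The only genuinely delicate point in the whole argument is pinning down the exponents $2k+1$ and $k-1$ in the two key identities; once these are guessed, the rest is a mechanical induction.
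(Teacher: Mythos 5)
Your proposal is correct and follows essentially the same route as the paper: the identities $\widehat P_k=(1+h)^{2k+1}\tilde P_k$ and $\widehat Q_k=e^{(k-1)F}\tilde Q_k$ are just a restatement of the paper's inductive formula $\tilde g^{(k)}(u)=\widehat P_k(\xi)/(1+h(\xi))^{2k+1}=\widehat Q_k(x)\,e^{(1-k)F(x)}$, verified by the same cancellation of the terms coming from differentiating the power of $1+h$ and the exponential factor, and the conclusion likewise rests on $h(0)=F(0)=0$ together with the equivalence of~\eqref{CRI} with the conditions $\tilde P_k(0)=\tilde Q_k(0)$. Your truncation argument also matches the paper's (one order lost per differentiation, none lost under multiplication), and is if anything spelled out more carefully than the original.
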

\begin{proof}
  First, the computation of the first two derivatives of
  $\tilde{g}(u)$ with respect to $u$ in terms of both $x$ and $\xi$ gives
\begin{eqnarray*}
  \tilde g'(u)&=&{\frac {1+h \left( \xi \right) -\xi\,{\frac {d h \left( \xi
 \right)}{d\xi}} }{ \left( 1+h \left( \xi \right)  \right) ^{3}}}={\frac {d}{d
x}}g \left( x \right) +g \left( x \right) f \left( x \right),
\end{eqnarray*}

\begin{eqnarray*}
  \tilde g''(u)&=&-{\frac {\left(1+h ( \xi )\right)\xi\,{\frac {d^{2} h \left( \xi \right)}{d{\xi}^{2}}}) +3\,{\frac {d h \left( \xi \right)}{d\xi}}\left( 1+h ( \xi )-\,\xi\,  {\frac {d h \left( \xi \right)}{d\xi}} 
 \right) }{ \left( 1+h \left( \xi \right)  \right) ^{5}}}\\
 &=& \left( 
{\frac {d^{2}g \left( x \right)}{d{x}^{2}}} + \left( {\frac {d g
 \left( x \right)}{dx}}  \right) f \left( x \right) +g \left( x \right) {
\frac {d f \left( x \right)}{dx}}  \right) {{\rm e}^{- F(x) }}.
\end{eqnarray*}

These formulas strongly suggest that the $k^{th}$ derivative of $\tilde{g}(u)$ can be written both in terms of $x$
and $\xi$ as follows:
\begin{eqnarray*}
  \tilde g^{(k)}(u)&=&\frac{\widehat P_k(\xi)}{(1+h(\xi))^{2k+1}}= \widehat Q_k(x)\,e^{(1-k)\,F(x)},
\end{eqnarray*}
with $\widehat P_k$ and $\widehat Q_k$ verifying the induction
formulas~\eqref{eq:widehatPQ}.

Then, we assume that this is so up to the order $n-1$. Differentiating
the two sides of the equality $\tilde g^{(n-1)}(u)$ with respect to
$u$ in terms of $x$ and $\xi$ gives
\begin{eqnarray*}
  \tilde g^{(n)}(u)&=&\frac{\widehat P_{n}(\xi)}{(1+h(\xi))^{2n+1}}= \widehat Q_n(x)\,e^{(1-n)\,F(x)},
\end{eqnarray*}
where $\widehat P_{n}(\xi)$ and $ \widehat Q_n(x)$ satisfy
formulas~\eqref{eq:widehatPQ} as expected.

As $F(0)=0$ and $h(0)=0$, necessary and sufficient conditions are
given by $\widehat P_k(0)=\widehat Q_k(0), k\ge 0$.

When our aim is to establish a fixed number $M$ of necessary
conditions, we restrict ourselves to the power series expansion around
$0$ of $\widehat P_k$ and $\widehat Q_k$ for $1\le k \le M$ up to the
order $M-k$, which is the minimal necessary truncation order. Indeed,
by formulas~\eqref{eq:widehatPQ}, to obtain $\widehat P_M(0)$ and
$\widehat Q_M(0)$ it is sufficient to compute a power series expansion
around $0$ of $\widehat P_M(\xi)$ and $\widehat Q_M(x)$ up to order
$0$ (i.e. constant terms) which require the power series expansion
around $0$ of $\widehat P_{M-1}(\xi)$ and $\widehat Q_{M-1}(x)$ up to
order $1$, and so on.
\end{proof}
In the practical use of the described algorithm we are concerned with
a finite number $M$ of necessary conditions. When a candidate for an
isochronous center is identified, we try to write down its Urabe
function under a closed-form expression, and prove the sufficiency
using again Theorem~\ref{thmC}.
\section{The RCA}
\label{sec:RCA}
In this section we restrict ourselves to systems~\eqref{L2P} for which
$f$ and $g$ are rational functions. For this particular case, we
describe an easy to handle couple of polynomial recursive formulas
which gives the $k$th derivatives of each side of~\eqref{CRI}.  Those
formulas apply in particular to systems~\eqref{CHERKAS} when they are
reducible to Li\'enard type differential
equation. The paper~\cite{BaBoChSt2010} is based on it.

We denote $f(x)=N_f(x)/D_f(x)$ (resp. $g(x)=N_g(x)/D_g(x)$), where
$N_f$, $N_g$, $D_f$ and $D_g$ are polynomials such that $D_f(0)=1$,
$D_g(0)=1$ and $\mbox{pgcd}(N_f,D_f)=1$, $\mbox{pgcd}(N_g,D_g)=1$.

\begin{thm}[Rational C-Algorithm - RCA] 
\label{thmRCA}
There exists a positive integer $M_0$, such that for any
  $M\ge M_0$ the following assertions are equivalents:
  \begin{enumerate}
  \item 
    the origin $O$ of system~\eqref{L2P} is an isochronous center;
  \item \label{item:2} there exists a real analytic odd function $h$ satisfying    \[ P_k(0)=Q_k(0)\] for all $0\le k \le M$,
    where
    \begin{equation}\label{eq:PQ}
      \left. \begin{aligned}P_0(\xi)&=\xi\\
          P_{k}(\xi)&=\left( {\frac {dP_{{k-1}} \left( \xi \right)}{d\xi}}  \right)  \left( 1+h \left( \xi \right)  \right) - \left( 2\,k-1 \right) P_{{k-1}} \left( \xi \right) {\frac {dh \left( \xi \right)}{d\xi}}\\
          Q_0(x)&=N_g(x)\\
          Q_k(x)&= { Q_{{k-1}} ( x )  D_g ( x ) \left(
                  \left( 1-k \right) {\frac {dD_f ( x )}{dx}} + \left(
                    2-k \right) N_f ( x ) \right) 
              }
          \\
          &{-k\, Q_{{k-1}} ( x )  {\frac
                    {dD_g ( x )}{dx}}  D_f ( x ) + {\frac {dQ_{{k-1}}\left( x \right)
                  }{dx}} D_g( x ) D_f ( x )}
        \end{aligned} \right\}.
    \end{equation}
  \end{enumerate}
  Moreover, as we only need the values of the $ P_k$ and $ Q_k$ at 0,
  it is sufficient to compute the power series expansions of $ P_k(x)$
  and $ Q_k(x)$ at order $M-k$, i.e. to truncate the polynomials $P_k$
  and $Q_k$ up to degree $M-k$.
\end{thm}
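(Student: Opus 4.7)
The plan is to reduce Theorem~\ref{thmRCA} to the Reduced C-Algorithm (Theorem~\ref{thmReCA}) by a clearing-of-denominators identification of the two recursions, and then to handle the finiteness bound $M_0$ by a Noetherianity argument on the coefficients of $h$ and of $f,g$.

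First, the recursion defining $P_k$ is term-for-term identical to the one defining $\widehat P_k$, so $P_k(\xi) = \widehat P_k(\xi)$ holds as analytic functions of $\xi$. For the $Q$-side, I would guess and establish by induction on $k$ the identification
\[
Q_k(x) = \widehat Q_k(x)\, D_g(x)^{k+1} D_f(x)^k.
\]
The base case is $Q_0 = N_g = g\, D_g = \widehat Q_0\, D_g$. For the inductive step, substitute $Q_{k-1} = \widehat Q_{k-1}\, D_g^{k} D_f^{k-1}$ into~\eqref{eq:PQ}, expand $Q_{k-1}'\, D_g D_f$ by the Leibniz rule, and observe that the two $D_g'$ contributions cancel while the $D_f'$ contributions recombine, so that what remains factors as $D_g^{k+1} D_f^{k}\,[\widehat Q_{k-1}' - (k-2) f\, \widehat Q_{k-1}] = D_g^{k+1} D_f^{k}\, \widehat Q_k$, as desired. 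Since $D_g(0) = D_f(0) = 1$, this gives $Q_k(0) = \widehat Q_k(0)$, and the relations $P_k(0) = Q_k(0)$ become exactly the ReCA conditions $\widehat P_k(0) = \widehat Q_k(0)$. By Theorem~\ref{thmReCA}, assertions (1) and (2) are therefore equivalent once we allow $M = +\infty$.

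To obtain a finite $M_0$ producing the same equivalence, I would expand $h(\xi) = \sum_{j \geq 0} h_{2j+1}\, \xi^{2j+1}$ as a formal odd series and note that each equation $P_k(0) - Q_k(0) = 0$ is a polynomial in the unknowns $h_1, h_3, \ldots, h_{2k-1}$ and in the (finitely many) coefficients of $N_f, D_f, N_g, D_g$. One then solves the system triangularly: each new equation either determines the next coefficient $h_{2k-1}$ as a polynomial function of the previously determined quantities, or collapses to a polynomial constraint involving only the coefficients of $f$ and $g$. Because these constraints live in a polynomial ring in the finitely many coefficients of $f,g$, the Hilbert basis theorem guarantees that the ascending chain of ideals generated by the first $k$ such constraints stabilizes at some finite step, which we take as $M_0$. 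Beyond that step, all further conditions follow algebraically from the first $M_0$ ones, so for any $M \geq M_0$ the finite list is equivalent to the infinite one. The final truncation-order statement is then immediate from~\eqref{eq:PQ}: each differentiation and each evaluation at $0$ lowers the needed Taylor order by one, so obtaining $P_k(0), Q_k(0)$ for $k \leq M$ requires only the expansions at order $M-k$.

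The main obstacle I anticipate is the triangularity step in the Noetherianity argument: one has to verify that $h_{2k-1}$ genuinely appears in the $k$-th equation with an invertible leading coefficient, so that elimination of the $h_i$ produces an honest ideal in the parameters of $(f,g)$ alone. A clean way to control this would be to inspect the leading $\xi$- and $x$-behavior of $\widehat P_k$ and $\widehat Q_k$ and track how a new coefficient of $h$ enters each time the recursion is applied; the normalization $g'(0) = 1$ should ensure that the pivoting is nondegenerate near the origin.
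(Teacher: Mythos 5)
Your argument is correct and follows essentially the same route as the paper's. The paper also establishes, by induction on $k$, the identity
\[
\tilde g^{(k)}(u)=\frac{P_k(\xi)}{(1+h(\xi))^{2k+1}}=\frac{Q_k(x)}{D_f(x)^k D_g(x)^{k+1}}\,e^{(1-k)F(x)},
\]
which is exactly your identification $P_k=\widehat P_k$ and $Q_k=\widehat Q_k\,D_g^{k+1}D_f^{k}$ (your cancellation of the $D_g'$ terms and recombination of the $D_f'$ terms in the inductive step checks out), and it likewise derives the existence of $M_0$ from the Hilbert Basis Theorem via the Ascending Chain Condition. The one point where you genuinely go beyond the paper is the finiteness step: the paper applies the ACC directly to the chain $I_j=\langle P_k(0)-Q_k(0),\ 0\le k\le j\rangle$ without specifying the ambient ring, which is delicate since these expressions involve the infinitely many Taylor coefficients of the unknown $h$, and a polynomial ring in infinitely many variables is not Noetherian. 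Your triangular elimination of the $h_{2k-1}$, reducing to an ascending chain of ideals in the finitely many coefficients of $f$ and $g$, is precisely the extra step that makes this rigorous; the nondegenerate pivot you flag as the main obstacle does hold at the first stage (one computes $\widehat P_2(0)=-3h'(0)$ while $\widehat Q_2(0)$ does not involve $h$, so the condition at $k=2$ determines $h_1$ with invertible leading coefficient $-3$), and the same bookkeeping on the highest-order derivative of $h$ entering $\widehat P_k$ carries the elimination through the subsequent stages.
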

\begin{proof}
  As in the proof of the previous theorem, those formulas are found by
  induction on $k$:
\begin{equation}\label{CRA}
\tilde g^{(k)}(u)=\frac{ P_k(\xi)}{(1+h(\xi))^{2k+1}}
  =\frac{ Q_k(x)}{D_f(x)^k\,D_g(x)^{k+1}}e^{(1-k)F(x)}.
\end{equation} 

It remains to prove that there exists a finite $M_0$ such that the
$M_0$ first conditions are sufficient. This comes from the Hilbert
Basis Theorem, and more precisely the Ascending Chain Condition
(see~\cite{CoxLittleOShea2007}) applied to the ascending chain of
ideals $I_j = \langle P_k(0)-Q_k(0), \quad 0 \le k\le j \rangle$. Then
there exists an $M_0\ge 0$ such that $I_{M_0}=I_{M_0+1} = \dots = I_{\infty}$.
\end{proof}
\section{Efficiency of the RCA}
The original C-Algorithm which is based on~\eqref{eq:Calgo} will be
denoted by $A_0$.  In this section we study the efficiency of the
algorithms resulting from Theorems~\ref{thmReCA} and~\ref{thmRCA},
that will be denoted by $A_2,\,A_3,\,A_4$ and $A_5$:
\begin{itemize}
\item $A_1$ is a truncated C-algorithm. It is based on the
  formulas~\eqref{eq:Calgo}, for which we apply the truncation
  procedure using power series.
\item $A_2$ is the algorithm based on the
  formulas~\eqref{eq:widehatPQ} of Theorem~\ref{thmReCA} where the truncation
  procedure is applied, that is ReCA.
\item $A_3$ is the algorithm based on the
  formulas~\eqref{eq:widehatPQ} of Theorem~\ref{thmReCA} without truncations.
\item $A_4$ is the algorithm based on the formulas~\eqref{eq:PQ} of
  Theorem~\ref{thmRCA} where the truncation procedure is applied, that
  is RCA.
\item $A_5$ is the algorithm based on the formulas~\eqref{eq:PQ} of
  Theorem~\ref{thmRCA} without truncations.
\end{itemize}
To compare the efficiency of the above $6$ algorithms we will apply them to the quartic system 
\begin{equation}\label{L2PLANQUARTIC}
 \left. \begin{aligned}\dot x&= - y+ a_{1,1}xy+ a_{2,1}x^2y + a_{3,1}x^3y\\
 \dot y&= x + b_{2,0}x^2 + b_{3,0}x^3+ b_{0,2}y^2 + b_{1,2}xy^2 + b_{2,2}x^2y^2+ b_{4,0}x^4 \end{aligned}\right\}. 
\end{equation}
that is system $(15)$ from~\cite{BoussaadaChouikhaStrelcyn2010} or
system $(3.1)$ from~\cite{BaBoChSt2010}. By standard reduction this
system is reducible to Li\'enard type equation~\eqref{L2}
with $$f(x)={\frac
  {b_{{0,2}}+b_{{1,2}}x+b_{{2,2}}{x}^{2}+a_{{1,1}}+2\,a_{{2,1}}x+
    3\,a_{{3,1}}{x}^{2}}{1-a_{{1,1}}x-a_{{2,1}}{x}^{2}-a_{{3,1}}{x}^{3}}}
$$ and $$g(x)=\left( 1-a_{{1,1}}x-a_{{2,1}}{x}^{2}-a_{{3,1}}{x}^{3} \right) 
\left( x+b_{{2,0}}{x}^{2}+b_{{3,0}}{x}^{3}+b_{{4,0}}{x}^{4} \right)
.$$ There are 18 unknowns, 9 for the $a_{i,j}$ and $b_{i,j}$ and 9
for the coefficients of the power series expansion of $h$ up to order
17 (remember that $h$ is odd). Then, it is reasonable to compute the
conditions~\eqref{eq:PQ} at least up to order $M=19$. Since the depth of the isochronous center still an open problem for system \eqref{L2PLANQUARTIC}, then investigations need a higher number of necessary conditions. In our comparative study we ask for the first $30$ necessary conditions by each of the presented algorithms.

\begin{table}[ht]
\begin{center}
\begin{tabular}{|r|r|r|r|r|r|r|}
\hline\hline
Order of & C-Algorithm & & ReCA & & RCA & \\
  derivation  &  $A_0$ & $A_1$ & $A_2$ & $A_3$& $A_4$ & $A_5$ \\
  \hline
\hline
10& 230,8 &0,52 & 0,0 & 6,7&0,0& 5,7\\
\hline
15&5920,2  &19,1 &0,6  &523,4 &0,4&520,0\\
\hline
20&  &1168,6 &  4,6& &2,6&\\
\hline
 30&  & & 168,7 & &84,9&\\
\hline
\end{tabular}
\end{center}
  \caption{CPU time in seconds on Pentium 2,4 GHz with 4 Gb of memory
  }
\end{table}
The superiority of ReCA and RCA is obvious as well as the role of the
truncation.  The absence of values means that in that case the
computations failed by lack of memory.

\section{Examples and comments}
Let us recall that the isochronicity problem for planar cubic systems
(linear center perturbed by cubic nonlinearity) is still open. This
fact is due to the huge number of parameters (14 parameters).  In the
same time, several recent works have proven the power of the
algorithmic methods in the characterization of isochronous
centers. For instance we quote the normal forms approach established
by V. G. Romanovsky~\cite{RomanovskiShafer2009} and used in several of
his coauthored papers
\cite{ChenRomanovski2010,DolicaninMilovanovicRomanovski2007,ChenRomanovskiZhang2008}.
Particularly, this method has proven its performance in the study of
time-reversible isochronous centers. Indeed, the paper
\cite{ChenRomanovski2010} contains the complete set of time reversible
isochronous centers of linear center perturbed by cubic nonlinearity.
Hence the cubic isochronicity problem is still open only in the case
of non time-reversible systems.

Using Algorithm~RCA, we succeeded to establish several new cubic and
quartic isochronous
centers~\cite{BaBoChSt2010,BoussaadaChouikhaStrelcyn2010}.  Among
others, we found in~\cite{BaBoChSt2010} three families of new non
time-reversible cubic isochronous systems:
\begin{equation}\label{CUB1}
\left.\begin{aligned} \dot x &=-y-2\,b_{{2,0}}xy+{x}^{2}+2\,b_{{2,0}}{x}^{3}
\\ \dot y &=x-4\,b_{{2,0}}{y}^{2}-2\,xy+b_{{2,0}}{x}^{2}+4\,b_{{2,0}}{x}^{2}y+2\,{x}^{3}
 \end{aligned}\right\}, 
   \end{equation} 
 
\begin{equation}\label{CUB6} 
\left.\begin{aligned} \dot x &=-y\pm 2\,\sqrt
    {2}xy+{x}^{2}\mp 2\,\sqrt {2}{x}^{3}
\\ \dot y &=x\pm 8\,\sqrt {2}{y}^{2}-2\,xy\mp 3\,\sqrt {2}{x}
^{2}\mp 12\,\sqrt {2}{x}^{2}y+10\,{x}^{3}
 \end{aligned}\right\}, 
   \end{equation} 

\begin{equation}\label{CUB2}
\left.\begin{aligned} \dot x &=-y-\frac{1}{2}\,b_{{2,0}}xy+{x}^{2}+\frac{1}{2}\,b_{{2,0}}{x}^{3}
\\ \dot y &=x-b_{{2,0}}{y}^{2}-2\,xy+b_{{2,0}}{x}^{2}+b_{{2,0}}{x}^{2}y+\left( 2+\frac{1}{4}\,{b_{{2,0}}}^{2} \right) {x}^{3}
 \end{aligned}\right\}. 
   \end{equation}

   In \cite{ChenRomanovskiZhang2008} time-reversible isochronous
   centers of homogeneous quartic perturbation of the linear center
   are completely established.  In~\cite{BaBoChSt2010}, using RCA we
   found a large list of new non time-reversible quartic isochronous
   centers. We also found the following family of systems, for which
   we conjecture it has an isochronous center at~$0$:
   \begin{equation}\label{ST26} 
     \left. \begin{aligned}
         \dot x&=-y+ \left( -\frac{3}{8}-2\,b_{{2,2}} \right) {x}^{2}y+ \left( \frac{1}{16}+b_{{2,2}}
         \right) {x}^{3}y+xy
         \\
         \dot y&=x-\frac{3\,{x}^{2}}{4}+\frac{{y}^{2}}{4}+\frac{3\,{x}^{3}}{8}-2\,b_{{2,2}}x{y}^{2}+b_{{2,2}
         }{x}^{2}{y}^{2}-\frac{{x}^{4}}{16}
       \end{aligned}\right\}. 
   \end{equation}
   We were able to prove the isochronicity of the system in few
   particular cases, for instance for $b_{{2,2}}\in\{-
   \frac{1}{16},\,0,\, \frac{1}{16}\}$, by computing explicitly the
   Urabe function:
\begin{equation*}
h_{{\{b_{{2,2}}(\xi)=\frac{1}{16}\}}}={\frac {\sqrt {2}\xi\,\sqrt
    {2\,{\xi}^{2}+32} \left( {\xi}^{2}+12    \right) }{ 2\,\left(
      {\xi}^{2}+4 \right) \left( {\xi}^{2}+16    \right) }},
  \end{equation*} 
  \begin{equation*}   
h_{{\{b_{{2,2}}(\xi)=-\frac{1}{16}\}}}=\frac{\sqrt {2}\sqrt {2\,L
    \left(\frac{{\xi}^{2}}{4} \right) +    8}\sqrt {{\frac
      {{\xi}^{2}}{L \left( \frac{{\xi}^{2}}{4} \right)      }}} \left(
    L \left( \frac{{\xi}^{2}}{4}\right) +3 \right) L  \left(
    \frac{{\xi}^{2}}{4} \right)}{ 2\,{\xi} \left( L \left(
      \frac{{\xi}^{2}}{4} \right) +4 \right) \left( L \left(
      \frac{{\xi}^{2}}{4} \right) +1 \right)} ,
\end{equation*}
where  $L={\it  LambertW}$  is the Lambert function (see \cite{NIST2010}),
\begin{equation*}
h_{{\{b_{{2,2}}=0\}}}(\xi)=\frac{\sqrt {2}\sqrt {{\frac {-4+{\xi}^{2}+2\,\sqrt {4+2\,{\xi}^{2}}}{{\xi}^{2}}}}\xi\, \left( {\xi}^{2}+2\,\sqrt {4+2\,{\xi}^{2}}+2  \right)}{ \left( 2+{\xi}^{2} \right) \left( \sqrt {4+2\,{\xi}^{2}}+6  \right)} .
\end{equation*}


\paragraph{Acknowledgements}
We thank A. Raouf Chouikha (University Paris 13, France) 
for carefully
reading the manuscript of this note, as well as  Marie-Claude
Werquin (University Paris 13, France) who corrected our scientific
English. We are particularly indebted to Jean-Marie Strelcyn (University of Rouen, France) for his great help in final redaction of this note. Last but not least, we thank Jaume Llibre (Universitat Aut\`onoma de Barcelona) for valuable suggestions.

\bibliographystyle{plain}
\bibliography{BBCS}

\end{document}